\def\gjoin{\makebox[1.1\width][r]{\raisebox{1pt}{$\diamondplus$}}}
\def\mybox{\makebox[1.1\width][r]{\raisebox{1pt}{$\Box$}}}
\newtheorem{theorem}{Theorem}[section]
\newtheorem{lemma}[theorem]{Lemma}
\theoremstyle{definition}
\theoremstyle{definition}
\theoremstyle{remark}
\numberwithin{equation}{section}
\theoremstyle{plain}
\newtheorem{question}[theorem]{Question}
\newtheorem{cor}[theorem]{Corollary}
\newtheorem{prop}[theorem]{Proposition}
\begin{document}

\tikzset{
    third arrow/.style={
    decoration={markings,mark=at position 0.67 with {\arrow[scale = 2]{>}}},
    postaction={decorate},
    shorten >=0.4pt}}

\title{Fool's Solitaire on Joins and Cartesian Products of Graphs}
\author{Sarah Loeb}
\address[Sarah Loeb]{Department of Mathematics, University of Illinois at Urbana--Champaign,  Urbana, IL, 61801}
\email{sloeb2@illinois.edu}

\author{Jennifer Wise}
\address[Jennifer Wise]{Department of Mathematics, University of Illinois at Urbana--Champaign,  Urbana, IL, 61801}
\email{jiwise2@illinois.edu}

\begin{abstract}
\emph{Peg solitaire} is a game generalized to connected graphs by Beeler and Hoilman. In the game pegs are placed on all but one vertex. If $xyz$ form a 3-vertex path and $x$ and $y$ each have a peg but $z$ does not, then we can remove the pegs at $x$ and $y$ and place a peg at $z$. By analogy with the moves in the original game, this is called a \emph{jump}. The goal of the peg solitaire game on graphs is to find jumps that reduce the number of pegs on the graph to 1. 

Beeler and Rodriguez proposed a variant where we instead want to maximize the number of pegs remaining when no more jumps can be made. Maximizing over all initial locations of a single hole, the maximum number of pegs left on a graph $G$ when no jumps remain is the fool's solitaire number $F(G)$.
We determine the fool's solitaire number for the join of any graphs $G$ and $H$. For the cartesian product, we determine $F(G \mybox K_k)$ when $k \ge 3$ and $G$ is connected and show why our argument fails when $k=2$. Finally, we give conditions on graphs $G$ and $H$ that imply $F(G \mybox H) \ge F(G) F(H)$.
\end{abstract}

\maketitle

Keywords: peg solitaire, graph theory, games on graphs

\section{Introduction}
\emph{Peg solitaire} is a game generalized to connected graphs by Beeler and Hoilman~\cite{BH}. In the peg solitaire game on graphs, each vertex except one starts with a peg. Vertices without pegs are said to be \emph{holes}. If adjacent vertices $x$ and $y$ have pegs, and $z$ adjacent to $y$ is a hole, then we may \emph{jump} the peg at $x$ over the peg at $y$ and into the hole at $z$. This removes the peg at $y$ so that $x$ and $y$ become holes and $z$ has a peg. We denote this jump by $xyz$. 

In general, if we start with some configuration of pegs and holes, and some succession of jumps reduces the number of pegs to 1, then the configuration is \emph{solvable}. In the peg solitaire game on a graph $G$, if some configuration with a hole at one vertex and pegs at all other vertices is solvable, then we say $G$ is \emph{solvable}. If $G$ can be solved starting with a single hole at any vertex, then $G$ is \emph{freely solvable}. Solvability requires $G$ to be connected.\footnote{There are several traditional boards marketed commercially, a triangle with 15 positions in the U.S., a portion of a grid in England (marketed as ``Hi-Q'' in the U.S.), and a European board with more positions than the U.S. board. The significant distinction between these games and the graph version is that they restrict jumps to be made along geometric straight lines.}

Beeler and Hoilman~\cite{BH} determined which graphs are solvable and freely solvable among stars, paths, cycles, complete graphs, and complete bipartite graphs. 
They also proved that the cartesian products of solvable graphs are solvable and gave additional sufficient conditions for the solvability of cartesian products of graphs. Walvoort~\cite{BW} also determined which of the trees of diameter 4 are solvable. 

An alternate goal for the peg solitaire game was proposed in~\cite{BR}. In the \emph{fool's solitaire} game, we instead try to maximize the number of pegs at the end of the process (when there are no remaining available moves). A \emph{terminal state} is an independent set of vertices that gives the final locations of the pegs when the game is played starting with some configuration having a single hole. The \emph{fool's solitaire number} of a graph $G$ is the maximum size of a terminal state and is denoted $F(G)$. A fundamental observation follows from the fact that moves from a configuration are the reverse of moves from the complementary configuration. 

\begin{prop}\label{important}~\cite{BR} A set of vertices $T$ is a terminal state of some solitaire game on $G$ if and only if a starting configuration with holes at vertices of $T$ and pegs at vertices of $V(G) - T$ can be reduced to a single peg.
\end{prop}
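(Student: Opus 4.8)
The plan is to make precise the symmetry noted just before the statement: a single legal jump carries a configuration $C$ to a configuration $C'$ if and only if that \emph{same} jump carries the complementary configuration $\overline{C'}$ to $\overline{C}$, where $\overline{C}$ is obtained from $C$ by turning every peg into a hole and every hole into a peg. Granting this, a sequence of legal jumps $C_0 \to C_1 \to \cdots \to C_n$ exists if and only if the reversed, complemented sequence $\overline{C_n} \to \overline{C_{n-1}} \to \cdots \to \overline{C_0}$ is also a sequence of legal jumps, and the proposition follows by reading this off in the two directions.

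First I would verify the local claim. A jump $xyz$ is legal from $C$ exactly when $xyz$ is a $3$-vertex path of $G$, $C$ has pegs at $x$ and $y$, and $C$ has a hole at $z$; performing it leaves every vertex outside $\{x,y,z\}$ unchanged and changes the triple $(x,y,z)$ from $(\text{peg},\text{peg},\text{hole})$ to $(\text{hole},\text{hole},\text{peg})$. Thus $\overline{C'}$ has pegs at $x$ and $y$ and a hole at $z$ (with everything else as in $\overline{C}$), so the jump $xyz$ is legal from $\overline{C'}$, and it produces holes at $x,y$ and a peg at $z$, i.e.\ exactly $\overline{C}$. Iterating along a sequence of jumps and reversing the order gives the equivalence stated above.

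For the forward direction, suppose $T$ is a terminal state of a solitaire game on $G$. By definition there is a start configuration $C_0$ with a single hole and a sequence of legal jumps ending at a configuration $C_n$ whose pegs occupy precisely $T$. Complementing and reversing the sequence, $\overline{C_n} \to \cdots \to \overline{C_0}$ is legal; here $\overline{C_n}$ has holes exactly on $T$ and pegs on $V(G)-T$, while $\overline{C_0}$ has a single peg. Thus the starting configuration with holes on $T$ and pegs on $V(G)-T$ can be reduced to a single peg. Conversely, if that configuration (call it $D_0$) can be reduced by legal jumps to a single peg, say at $D_m$, then $\overline{D_m} \to \cdots \to \overline{D_0}$ is legal, $\overline{D_m}$ has a single hole (a valid start), and $\overline{D_0}$ has pegs exactly on $T$; so the peg set $T$ is reachable from a single-hole start.

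The one place to be careful is the last step of the converse: reachability alone does not make $T$ a terminal state — one must also know that no jump is available from $\overline{D_0}$. This is immediate once $T$ is an independent set, since a jump requires two adjacent pegs, and independence is part of the definition of a terminal state, so there is nothing more to check. (If one prefers to drop independence from the hypothesis, a short separate argument shows that in a connected graph every configuration reachable from a single hole that admits no jump has an independent peg set, so the two notions coincide.) Apart from this bookkeeping the argument is a direct translation between the original game and its complement, so I do not expect a genuine obstacle.
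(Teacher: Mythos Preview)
Your proof is correct and is precisely the complementation/reversal argument the paper indicates in the sentence preceding the proposition (the paper itself does not write out a proof, citing~\cite{BR} instead). Your explicit check that the jump $xyz$ carries $\overline{C'}$ to $\overline{C}$, together with your care about the independence requirement in the converse direction, fills in exactly the details the paper leaves implicit.
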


Proposition~\ref{important} is used in our proofs of lower bounds on the fool's solitaire number. 
Letting $\alpha(G)$ denote the independence number of $G$, Beeler and Rodriguez~\cite{BR} also proved

\begin{prop}\label{complement}~\cite{BR} Let $G$ be a graph. Trivially $F(G) \le \alpha(G)$. Also, if $\alpha(G)\le|V(G)|-2$ and $V(G)-A$ is independent whenever $A$ is a maximum independent set, then $F(G) \le \alpha(G) - 1$. 
\end{prop}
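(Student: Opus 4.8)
The plan is to treat the two claims separately: the inequality $F(G)\le\alpha(G)$ is immediate from the definitions, and the sharpened bound follows by a short argument via Proposition~\ref{important}. For the first part, recall that by definition a terminal state is an independent set of vertices, so any terminal state has at most $\alpha(G)$ vertices; hence $F(G)\le\alpha(G)$, and nothing further is needed.

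For the refinement I would argue by contradiction. Suppose $F(G)=\alpha(G)$, and let $T$ be a terminal state with $|T|=\alpha(G)$, so that $T$ is a \emph{maximum} independent set of $G$. By Proposition~\ref{important}, the starting configuration with holes exactly at the vertices of $T$ and pegs exactly at the vertices of $V(G)-T$ can be reduced to a single peg. But by hypothesis $V(G)-T$ is independent whenever $T$ is a maximum independent set, so in this configuration no two pegs occupy adjacent vertices. A jump $xyz$ requires pegs on the adjacent vertices $x$ and $y$, so no jump is available from this configuration: it is already a terminal state. On the other hand, $|V(G)-T|=|V(G)|-\alpha(G)\ge 2$ by the assumption $\alpha(G)\le|V(G)|-2$, so the configuration has at least two pegs and cannot possibly be reduced to one. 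This contradiction forces $F(G)\le\alpha(G)-1$.

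The only real content is the observation that, thanks to Proposition~\ref{important}, a maximum terminal state $T$ would have to come from a starting peg configuration on $V(G)-T$ that is itself independent, hence frozen, which is absurd once there are at least two pegs. I do not anticipate a genuine obstacle here; the hypothesis $\alpha(G)\le|V(G)|-2$ is present precisely to exclude the degenerate cases in which $V(G)-T$ has fewer than two vertices, and the independence hypothesis on complements of maximum independent sets is exactly what makes the complementary configuration admit no moves.
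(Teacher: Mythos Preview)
Your argument is correct and is exactly the paper's approach: the paper's justification is the one-sentence remark that if the complement of every maximum independent set is independent and has at least two vertices, then by Proposition~\ref{important} no maximum independent set can be a terminal state. You have simply spelled out this reasoning in full detail.
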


The proposition holds since if the complement of every maximum independent set is independent and has at least two vertices, then by Proposition~\ref{important} no maximum independent set can be the terminal state of a solitaire game. 

The fool's solitaire numbers for complete graphs, stars, complete bipartite graphs, paths, cycles, and hypercubes were found in~\cite{BR}. The fool's solitaire number of trees with diameter 4 was computed by Walvoort~\cite{BW}. In particular, there is a class of diameter 4 trees for which $\alpha(G) - F(G)$ approaches $\alpha(G)/6$. 
, disproving an earlier conjecture that $\alpha(G) - F(G) \le 1$. It remains open how small $F(G)$ can be in terms of $\alpha(G)$. 

Beeler and Rodriguez~\cite{BR} proved $F(K_{n,m}) = \alpha(K_{n,m}) -1$, and thus Proposition~\ref{complement} is sharp. In Section~\ref{Joins}, we extend their result on complete bipartite graphs by determining the fool's solitaire number of all graphs whose complements are disconnected. 

Beeler and Rodriguez~\cite{BR} also asked for the behavior of the fool's solitaire number under the cartesian product operation. The \emph{cartesian product} of $G$ and $H$, denoted $G\mybox H$, is the graph with vertex set $V(G)\times V(H)$ such two vertices are adjacent if and only if they are equal in one coordinate and adjacent in the other. In Section~\ref{Cartesian}, we show $F(G \mybox K_k) = \alpha(G \mybox K_k)$ for $k \ge 3$ when $G$ is any connected graph. However, this behavior does not hold when $k=2$: if $G$ is a bipartite graph with a Hamiltonian path, then $F(G \mybox K_2) = \alpha(G \mybox K_2) - 1$. This leads us to ask, 

\begin{question}
What is $F(G \mybox K_2)$ when $G$ is not a bipartite graph having a Hamiltonian path? 
\end{question}

Walvoort~\cite{BW} asked for a non-trivial lower bound on $F(G)$. In this direction, we give sufficient conditions for $F(G \mybox H) \ge F(G) F(H)$ in Section~\ref{LowerBound}. This is a partial answer to the question in~\cite{BR} asking for the relationship among $F(G), F(H),$ and $F(G \mybox H)$. In considering the sharpness of our inequality, we ask, 

\begin{question}
By how much can $F(G \mybox H)$ exceed $F(G) F(H)$? 
\end{question}

Computer testing shows that $F(G \mybox H) \ge F(G)F(H)$ does not always hold: if $G$ is the star with 4 vertices and $H$ is the paw or $P_3$, then $F(G \mybox H) = F(G)F(H) - 1$. This leads to the question 

\begin{question}
When does $F(G)F(H)$ exceed $F(G \mybox H)$? 
\end{question}

\section{Joins}\label{Joins}
The \emph{join} of $G$ and $H$, denoted $G \gjoin H$, is formed by adding to the disjoint union of $G$ and $H$ all edges joining $V(G)$ and $V(H)$. Note that every join in connected and these are precisely the graphs whose complements are disconnected. For the complete bipartite graph $K_{n,m}$ with $n \ge m >1$, Beeler and Rodriguez~\cite{BR} showed $F(K_{n,m}) = n-1$. By viewing $K_{n,m}$ as $\overline{K_n} \gjoin \overline{K_m}$, we expand their methods to find the fool's solitaire number of all graph joins, starting with the case of joins with $K_1$.

\begin{lemma}
\label{K_1}
If $G$ is a graph, then $F(G \gjoin K_1) = \alpha(G \gjoin K_1)$.
\end{lemma}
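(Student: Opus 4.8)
The plan is to prove the two inequalities separately. The bound $F(G\gjoin K_1)\le\alpha(G\gjoin K_1)$ is immediate from Proposition~\ref{complement}. For the reverse bound, write $v$ for the vertex of the factor $K_1$, and first observe that $\alpha(G\gjoin K_1)=\alpha(G)$ whenever $G$ has a vertex: a maximum independent set of $G\gjoin K_1$ either avoids $v$, hence is an independent set of $G$, or equals $\{v\}$, and $\alpha(G)\ge 1$. (If $G$ has no vertices the statement is trivial.) So it suffices to exhibit a terminal state of $G\gjoin K_1$ of size $\alpha(G)$.

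Fix a maximum independent set $A$ of $G$ and let $B=V(G)\setminus A$. By Proposition~\ref{important} it is enough to show that the configuration on $G\gjoin K_1$ with holes exactly on $A$ — equivalently, with pegs exactly on $B\cup\{v\}$ — can be reduced to a single peg. Two features drive the argument: $v$ is adjacent to every other vertex, so $xvy$ is a $3$-vertex path for all distinct $x,y\in V(G)$; and, since $A$ is a maximal independent set, every vertex of $B$ has a neighbor in $A$. I will reduce the configuration by processing the pegs of $B$ in pairs, maintaining the invariant that the set of pegs is $\{v\}\cup B''$ for some $B''\subseteq B$ (this holds initially with $B''=B$, and the vertices of $A$ hold no pegs between steps). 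Given such a configuration with $|B''|\ge 2$, choose distinct $b_1,b_2\in B''$ and a neighbor $a\in A$ of $b_2$, and perform the jumps $b_1va$ and then $ab_2v$: the first is legal because $b_1v$ and $va$ are edges and $a$ is a hole, and the second because $ab_2$ and $b_2v$ are edges, $v$ has just become a hole, and $b_2$ is still a peg. The net effect deletes $b_1$ and $b_2$ from the set of pegs and restores the peg at $v$, so the invariant persists with $|B''|$ smaller by two. Iterating, we reach a configuration whose pegs are $\{v\}$ or $\{v,b^*\}$ for a single $b^*\in B$; in the first case we are done, and in the second the jump $b^*va'$ for any $a'\in A$ — which is nonempty, since $B\ne\emptyset$ forces $G$, hence $A$, to be nonempty — leaves a single peg. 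Thus $A$ is a terminal state and $F(G\gjoin K_1)\ge|A|=\alpha(G)=\alpha(G\gjoin K_1)$, as required.

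I do not expect a serious obstacle: the reduction is an explicit sequence of jumps, and the only things to check are the degenerate cases (empty $G$, empty $B$, odd $|B|$) and the legality of each jump, all of which the invariant handles. The point worth flagging is conceptual rather than technical — a naive attempt to collapse $B$ by jumping its pegs over one another fails as soon as $B$ is independent (for instance when $G$ is complete bipartite and $B$ is one of its parts), so routing every move through the universal vertex $v$ is what makes the argument uniform; for the same reason it is cleaner to verify the pairing step directly than to try to induct on $|V(G)|$, since the intermediate configurations are not of the form ``complement of a single hole in a smaller join.''
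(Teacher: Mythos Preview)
Your proof is correct and follows essentially the same approach as the paper: both route a two-jump process through the universal vertex to eliminate pegs of $B$ in pairs while restoring the peg at $v$, using that every vertex of $B$ has a neighbour in the maximum independent set $A$. Your version is in fact slightly more careful---you maintain an explicit invariant, handle the odd-$|B|$ endgame explicitly, and absorb the star case $B=\emptyset$ into the general argument rather than treating it separately as the paper does.
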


\begin{proof}
Always $F(G \gjoin K_1) \le \alpha(G \gjoin K_1) = \alpha(G)$, so we must show $F(G \gjoin K_1) \ge\alpha(G \gjoin K_1)$. If $G = \overline{K_n}$, then $G \gjoin K_1$ is a star and $F(G \gjoin K_1) = \alpha(G \gjoin K_1)$ because there are no available moves if we place the starting hole at the center of the star. Otherwise, let $S$ be a largest independent set of $G$, and let $z$ be the vertex outside $G$. We wish to show that $S$ is a terminal state; by Proposition~\ref{important} it suffices to solve the game where $S$ gives the locations of the starting holes. Since $S$ is a maximum independent set, there is a hole adjacent every peg in $G$. Start by jumping any peg in $G$ over the peg at $z$, and landing in a hole adjacent to another peg in $G$. We now have two adjacent pegs and we next jump one over the other and land at the hole at $z$. By repeating this two-jump process the number of pegs is reduced to 1. 
\end{proof}

The remaining case is when $G\gjoin H$ is not a complete bipartite graph and has no dominating vertex.

\begin{theorem}
Let $G$ and $H$ be graphs with $|V(G)|, |V(H)| \ge 2$ and $|E(G)| + |E(H)| \ge 1$. Then $F(G \gjoin H) = \alpha(G \gjoin H)$.
\end{theorem}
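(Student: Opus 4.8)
The target inequality is $F(G \gjoin H) \ge \alpha(G \gjoin H)$, as $F(G \gjoin H) \le \alpha(G \gjoin H)$ always. Every independent set of a join lies in a single side, so $\alpha(G \gjoin H) = \max\{\alpha(G), \alpha(H)\}$ and every maximum independent set of $G \gjoin H$ is a maximum independent set of $G$ or of $H$. By Proposition~\ref{important} it suffices to exhibit one maximum independent set $T$ of $G \gjoin H$ for which the configuration with holes exactly on $T$ can be reduced to a single peg. If $\alpha(G \gjoin H) = 1$ then $G \gjoin H$ is complete and every terminal state is a single peg, so $F = 1$; hence assume $\alpha := \alpha(G \gjoin H) \ge 2$. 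Some side has an edge; we may assume $|E(G)| \ge 1$, and if in addition $\alpha(G) < \alpha(H)$ and $|E(H)| \ge 1$ we interchange $G$ and $H$. This leaves two situations: \emph{(i)} $\alpha(G) \ge \alpha(H)$ (still with $|E(G)| \ge 1$); or \emph{(ii)} $\alpha(G) < \alpha(H)$ and $H$ is edgeless (still with $|E(G)| \ge 1$).

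For situation (ii) I would use a ``recycling'' maneuver. Here $\alpha = \alpha(H) = |V(H)|$, and with $T = V(H)$ the pegs occupy exactly $V(G)$, which contains an edge, while $T$ has $\ge 2$ vertices, each adjacent to every vertex of $V(G)$. In general, suppose the pegs occupy a set $Q$ inside one side of a join, that side's induced subgraph has an edge within $Q$, and the opposite side $D$ has $\ge 2$ vertices, all holes. First jump a peg of $Q$ over an adjacent peg of $Q$ into some $d \in D$: this leaves $|Q| - 1$ pegs in $Q$ and one peg at $d$. Then, as long as a peg of $Q$ remains, jump the single peg currently in $D$ over it into another vertex of $D$ (legal, since $|D| \ge 2$ and every vertex of $D$ is adjacent to every vertex of $Q$); each such jump erases one peg of $Q$ and keeps exactly one peg in $D$. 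We reach a single peg.

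For situation (i) I would take $T = S$, a maximum independent set of $G$, so $|S| = \alpha \ge 2$ and $P := V(G) \setminus S$ is nonempty (as $G$ has an edge). The holes are $S$, the pegs are $P \cup V(H)$, and there are at least $3$ pegs. The plan is a ``ping-pong'' between the two sides that maintains the invariant that \emph{each side carries a peg}. While at least three pegs remain, perform one of two moves. \textbf{Type A} (when the $H$-side has $\ge 2$ pegs): pick a $G$-side peg $q$ that is $G$-adjacent to a $G$-side hole $s$, and any $H$-side peg $h$, and jump $h$ over $q$ into $s$ (valid since $h \sim q$ across the join and $q \sim s$ in $G$); this keeps the $G$-side peg count fixed and drops the $H$-side count by one. \textbf{Type B} (when the $H$-side has exactly one peg $h_0$): then the $G$-side has $\ge 2$ pegs; pick a $G$-side peg $q$ and an $H$-side hole $h'$ and jump $h_0$ over $q$ into $h'$; this drops the $G$-side count by one and keeps one $H$-side peg. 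Each move lowers the total by one and preserves the invariant, so we reach exactly two pegs, one on each side; since they are adjacent across the join, a final cross-jump into a $G$-side hole (there are $|V(G)| - 1 \ge 2$ of them) leaves one peg.

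The step I expect to demand the most care is confirming that a Type A move is always possible, i.e.\ that whenever the $G$-side carries a peg it carries one adjacent to a $G$-side hole. Maximality of $S$ is the key: $S$ is a maximal independent set, hence dominating, so it meets every component of $G$; thus every component of $G$ starts with a hole. A Type A move places a peg on a $G$-side vertex while turning an adjacent vertex of the \emph{same} component into a hole, so it does not change the number of holes in any component of $G$, and a Type B move only creates $G$-side holes. Hence every component of $G$ always contains a hole, so any component also containing a peg contains a peg adjacent to a hole, and the move is available. I would also record the bookkeeping points that $P \neq \emptyset$ is exactly what lets the ping-pong begin---a first Type A move needs a peg of $P$ together with its neighbor in $S$---and that the case $\alpha = 1$ was handled separately.
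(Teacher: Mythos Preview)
Your argument is correct. The two minor slips---writing ``$|Q|-1$ pegs in $Q$'' rather than $|Q|-2$ after the first jump in situation~(ii), and ``$|V(G)|-1 \ge 2$'' rather than $\ge 1$ at the final cross-jump---are harmless, since one hole suffices and the recycling loop is stated to run only while a peg of $Q$ remains.

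The overall shape matches the paper's: apply Proposition~\ref{important} to the complementary configuration and split according to whether the side carrying the maximum independent set has an edge. Your situation~(ii) is essentially the paper's second case with the labels of $G$ and $H$ exchanged. The real difference is in situation~(i). The paper, when $G$ has an edge, uses only jumps of the pattern ``$G$ over $H$ into $G$'' and ``$H$ over $G$ into $H$'': in each such jump both adjacencies come from the join, so no edge of $G$ or $H$ is ever used and no domination argument is needed. Your Type~A jump $hqs$ instead lands along the $G$-edge $qs$, which is exactly what forces you to verify that every component of $G$ keeps a hole throughout. Both moves have the same effect---drop the $H$-peg count by one while keeping the $G$-peg count fixed---so replacing your Type~A by the paper's $G$-over-$H$-into-$G$ jump would let you delete the whole component-tracking paragraph.
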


\begin{proof}
Always $F(G \gjoin H) \le \alpha(G \gjoin H)$, so we must show $F(G \gjoin H) \ge \alpha(G \gjoin H)$. Without loss of generality, $\alpha(G) \ge \alpha(H)$ so $\alpha(G \gjoin H) = \alpha(G)$. By Proposition~\ref{important} consider the complementary problem, where the holes are at some maximum independent set $S$ in $G$.

If $G$ has an edge, then $G$ has a vertex with a peg and a vertex without a peg. Jump this peg over a peg in $H$ and land in $G$. Because all vertices in $H$ start with pegs and $|V(H)| \ge 2$, there remains a peg in $H$, so $H$ now has a peg and a hole. Jump a peg from $H$ over a peg in $G$, landing in the hole in $H$. Continue jumping from $H$ over $G$ to $H$ until there is exactly one peg left in $G$. Because $|V(G)| \ge 2$, there is a hole in $G$. Now we can jump the peg in $G$ over a peg in $H$ and land in $G$ until all pegs in $H$ are removed. This leaves a single peg.

If $G$ has no edge, then $H$ has an edge $e$. Also $S = V(G)$, so every vertex of $G$ is a hole and every vertex of $H$ has a peg. In particular, both endpoints of $e$ have pegs. Use one to jump over the other and into a hole in $G$. Now because $|V(G)| \ge 2$, we may repeatedly jump the peg in $G$ over a peg in $H$ to land in a hole in $G$ until all pegs in $H$ are gone. This leaves a single peg.
\end{proof}

The proof Beeler and Rodriguez~\cite{BR} give for $F(K_{n,m})$ uses Proposition~\ref{complement} for the upper bound and Proposition~\ref{important} for the lower bound. Assuming that $n \ge m \ge 2$, they start with holes at $n-1$ of the vertices in the larger part and jump the single peg in this part over pegs in the other part until the configuration is reduced to a single peg. Our proof above extends this concept to general graphs. Combining our results with theirs, we have $F(G \gjoin H) = \alpha(G \gjoin H)$ unless $G$ and $H$ are both independent and have at least two vertices, in which case $F(G \gjoin H) = \alpha(G \gjoin H) -1$. This gives the fool's solitaire number of all graphs whose complements are disconnected. 

\section{Cartesian Products}\label{Cartesian}

In this section we find $F(G \mybox K_k)$ for $k \ge 3$ when $G$ is a connected graph. A cartesian product is connected if and only if both factors are connected.

We start with three lemmas that aid in finding $F(G \mybox K_k)$. The first two discuss the location of the final peg when solving a complete graph. Note that since $\alpha(K_n)=1$, the fool's solitaire game on the complete graph is the same as the peg solitaire game on the complete graph.

\begin{lemma}
\label{nice5+}
For $k > 4$, the peg/fool's solitaire game on $K_k$ with initial hole at a specified vertex may end with the final peg at any vertex.
\end{lemma}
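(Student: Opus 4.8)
The plan is to prove this by explicit construction of jump sequences, exploiting the fact that $K_k$ is highly symmetric and extremely ``dense'' with moves once one has a single hole available. Label the vertices of $K_k$ as $v_1, \dots, v_k$, with the initial hole at $v_1$, and suppose we wish to end with the final peg at a target vertex $v_t$. The key structural observation is that in $K_k$, whenever there are at least two pegs and at least one hole, we can make a jump, so the game only gets stuck at a single peg; thus the real content is controlling \emph{where} that peg lands.

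First I would handle the easy sub-case $t = 1$, i.e.\ ending at the original hole: perform $v_2 v_3 v_1$ to get a peg back at $v_1$ (and holes at $v_2, v_3$), and then argue that $K_k$ with three holes and $k-3 \ge 2$ pegs — actually, it is cleaner to note $K_{k}$ restricted appropriately lets us finish on any vertex by induction on $k$. For the general target, I would reduce to a small ``base'' configuration. The cleanest approach: show that from the start we can reach, for any pair of distinct vertices $a, b$ other than what we need, a configuration with pegs on exactly $\{v_t, a, b\}$ forming a triangle with one extra hole, and then a single jump $a b c$ or symmetric move consolidates things — but one must be careful that the final jump leaves the peg at $v_t$. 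Concretely, I would aim to reach a configuration with pegs at exactly $\{v_t, x, y\}$ where $x,y \ne v_t$, and a hole at some $z \ne v_t$; then the jump $x y z$ leaves pegs at $\{v_t, z\}$, a hole everywhere else, and one more jump $z \, ? \, ?$ — no: with pegs only at $v_t$ and $z$ we jump $z\,$ over one of them — wait, we need three vertices. So instead the target configuration should be pegs at exactly $\{v_t, x, y, z\}$ with the rest holes ($k \ge 5$ guarantees at least one hole when we have $4$ pegs); then $x y w$ for a hole $w$, then $z \, (\text{peg}) \, (\text{hole})$ — I would organize the endgame as: reduce to pegs on $\{v_t, x, y, z\}$, jump $x y z' $ into some hole $z'$, now pegs on $\{v_t, z, z'\}$, hole at $x$; jump $z\, z'\, x$, leaving pegs on $\{v_t, x\}$ — still two pegs, stuck incorrectly. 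The correct endgame must genuinely terminate: from pegs on exactly $\{v_t, a, b\}$ with a hole at $c$, jump $a\, b\, c$? That needs $a \sim b \sim c$, true in $K_k$; result: pegs on $\{v_t, c\}$. Then jump nothing is possible only if $v_t \not\sim c$, impossible in $K_k$. So genuinely one needs to end from a \emph{triangle of pegs} $\{v_t, a, b\}$ via $a\,b\,v_t$? That requires $v_t$ to be a hole, contradiction. The resolution: end from pegs on $\{a, b, v_t\}$ via the jump $a\, v_t\, \text{?}$ — no. Hence the final jump must be $b\, a\, c$ landing so that the surviving peg coincides with $v_t$; this forces the penultimate configuration to have pegs at $\{v_t\}\cup\{a,b\}$ with $a\,b$ jumpable into a hole that is \emph{not} adjacent... impossible.

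So I would instead reverse perspective using Proposition~\ref{important}: ending at $v_t$ is equivalent to starting with holes at all of $V(K_k)\setminus\{v_t\}$, peg only at $v_t$ — but that has no moves. That is wrong too; the complementary statement relates the terminal set to a starting hole set, and here the terminal set is a single vertex $\{v_t\}$, whose complement is $V\setminus\{v_t\}$, a set of $k-1$ holes and one peg, which indeed cannot move. The catch is that the complementary game starts from the \emph{original} hole's complement, so the right framing is: there is a solution of $K_k$ with hole at $v_1$ ending at $v_t$ iff there is a solution with hole at $v_t$ ending at $v_1$ (reversing the jump sequence, since a reversed jump $xyz$ is the jump $zyx$). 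By vertex-transitivity of $K_k$ this symmetry is automatic, so it suffices to show: \emph{for some} choice of final vertex the game is solvable from hole at $v_1$, i.e.\ $K_k$ is solvable — known from~\cite{BH} for $k \ne 3$ — PLUS that we have freedom in the final location. Therefore the real plan is: (1) exhibit one explicit solution of $K_k$ from hole at $v_1$ ending at a convenient vertex, say $v_k$; (2) show by a swapping/relabeling argument that the ending vertex can be moved to any $v_t$ — the cleanest mechanism is to run the same solution on the isomorphic copy obtained by transposing $v_k \leftrightarrow v_t$ (which fixes $v_1$ when $t \ne 1$), handling $t=1$ separately via the reversed-sequence symmetry above. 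The main obstacle I anticipate is the bookkeeping in step (1): constructing an explicit length-$(k-1)$ jump sequence on $K_k$ and verifying it never gets stuck before reaching one peg; I expect an inductive construction (peel off one vertex via a single jump reducing $K_k$ to essentially $K_{k-1}$ plus a spare hole) where the hypothesis $k > 4$ is used to keep enough pegs around that the inductive sub-game still has a hole to work with, and the small cases $k=5, 6$ are checked by hand as the induction base.
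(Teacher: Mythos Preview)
Your plan has a genuine gap at the case $t = 1$ (final peg at the location of the initial hole). The reversal symmetry you invoke --- that a solution from hole at $v_1$ ending at $v_t$ exists iff one from hole at $v_t$ ending at $v_1$ exists --- is a correct statement, but your justification ``a reversed jump $xyz$ is the jump $zyx$'' is not: the time-reverse of a jump adds a peg and is not a jump; the correct mechanism is complementation as in Proposition~\ref{important}, applying the \emph{same} jumps in reverse order to the complementary configuration. More importantly, this symmetry merely swaps the roles of start and end, so for $t = 1$ it is a tautology. Your automorphism step handles precisely the targets $t \ne 1$ (the transposition $v_k \leftrightarrow v_t$ fixes $v_1$ only then), and no composition of automorphisms with the start/end swap can ever convert a solution with distinct start and end into one where they coincide. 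Hence the case $t = 1$ genuinely requires its own construction, which your proposal does not supply; your earlier aborted attempt (``perform $v_2 v_3 v_1$ \dots by induction on $k$'') was actually heading in a workable direction before you abandoned it.

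The paper's proof is a short direct construction much closer to that abandoned idea than to the symmetry plan. Given target $v$, use one or two preliminary jumps to make $v$ a hole while leaving at least one hole in $K_k - v$; then play freely inside $K_k - v$ (a copy of $K_{k-1}$, where a move is always available while at least two pegs and one hole remain) until exactly two pegs are left there; finally jump one over the other into $v$. The hypothesis $k>4$ guarantees $k-2\ge 3$ jumps in total, so the preliminary jumps and the final jump can all be distinct. This handles $t=1$ and $t\ne 1$ uniformly without any appeal to symmetry or reversal.
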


\begin{proof}
Let $v$ be the vertex required to be occupied at the end of the game. Since it takes $k-2$ jumps to end the game, at least three jumps occur. 

If $v$ starts with the hole, then the first jump ends with a peg at $v$. With the second jump, we can jump a peg over the peg at $v$ to one of the new holes. Now $K_k-v$ has at least one hole and we can play on $K_k - v$ until two pegs remain. Finally, jump one peg over the other to leave the last peg on $v$.

If $v$ starts with a peg, then we can first jump it over another peg and land in the hole. Now proceed on $K_k-v$ as in the previous case.
\end{proof}

\begin{lemma}
\label{nice4}
The peg/fool's solitaire game on $K_4$ may end with the peg in any location except the location of the starting hole.
\end{lemma}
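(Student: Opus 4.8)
The plan is to analyze the two jumps directly, since ending the peg solitaire game on $K_4$ from a single hole requires exactly $k-2 = 2$ jumps, and in $K_4$ every pair of vertices is adjacent so the only constraint on a jump $xyz$ is that $x,y$ are occupied and $z$ is empty. Let $u$ be the starting hole and $v$ the vertex we wish to occupy at the end.

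First I would establish the negative part, that the final peg cannot sit at $u$. The first jump must land a peg into the unique hole $u$, so immediately after it the occupied vertices are $u$ together with exactly one other vertex, say $w$, while the remaining two vertices are holes. The second (and last) jump moves one of the pegs at $u$, $w$ over the other into one of those two holes; hence the final peg lands at a vertex distinct from both $u$ and $w$, and in particular never at $u$.

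Next I would show that every vertex $v \neq u$ is reachable. Given such a $v$, choose the first jump so that its ``leftover'' occupied vertex $w$ is neither $u$ nor $v$; this is possible because $K_4 - u$ has three vertices, so we may take $w$ and the peg it jumps over to be the two vertices other than $v$, and the jump is legal since $u$ is a hole. After this jump the pegs lie at $u$ and $w$ and $v$ is one of the two holes, so the second jump (jumping the peg at $u$ over the peg at $w$, or vice versa) can be aimed into $v$, leaving the last peg there. Combining the two parts gives the lemma.

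The argument involves no real obstacle; the only point needing care is checking that a suitable first jump exists for every target $v$, which works precisely because $|V(K_4)| = 4$ is just large enough to leave room for the leftover vertex $w$. (For $k > 4$ this slack is larger and Lemma~\ref{nice5+} even puts the final peg back onto the starting hole, while for $k = 3$ the slack vanishes and the analogous freedom is lost.)
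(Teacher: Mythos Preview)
Your argument is correct and mirrors the paper's: both observe that exactly two jumps occur, that the first must fill $u$ (so the game cannot end there), and that for $v\neq u$ one chooses the first jump to vacate $v$ so the second can land in it. One small wording slip: you write ``we may take $w$ and the peg it jumps over to be the two vertices other than $v$,'' but the leftover occupied vertex $w$ is precisely the vertex \emph{not} involved in the jump; what you actually need (and what the rest of your proof uses) is that the first jump uses $v$ as one of its two pegs, which automatically makes the uninvolved vertex $w$ distinct from $v$.
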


\begin{proof}
Let $u$ be the location of the initial hole and $v$ be the vertex required to be occupied at the end of the game. Two jumps will end the game. Because the first jump must end with a peg at $u$, the second jump must end with no peg on $u$. Hence we cannot have $v = u$. If $v \neq u$, then the first jump can remove the peg at $v$, and the second jump can land there.
\end{proof}

In contrast, in the peg/fool's solitaire game on $K_3$ there is a single jump. Therefore, the final peg must be at the location of the starting hole. Lacking the flexibility guaranteed by Lemma~\ref{nice5+} and~\ref{nice4}, when studying $G \mybox K_3$ we use a property of the game on $P_2 \mybox K_3$.

\begin{lemma}\label{k3lem}
Given at least one peg and at least one hole in each copy of $K_3$ in $P_2 \mybox K_3$ such that the locations of the starting pegs do not form an independent set, a succession of jumps can end with no pegs on one copy of $K_3$ and at least one peg and one hole on the other copy of $K_3$. If there is only one peg at the end, then there are two possible locations for that peg. 
\end{lemma}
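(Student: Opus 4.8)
The plan is to use the symmetry of $P_2 \mybox K_3$ to cut down to a handful of cases and then exhibit explicit jump sequences. Write the two copies of $K_3$ as $A$, on vertices $a_1,a_2,a_3$, and $B$, on vertices $b_1,b_2,b_3$, with $a_i$ adjacent to $b_i$. The independence number of $P_2 \mybox K_3$ is $2$, and an independent set of size $2$ has the form $\{a_i,b_j\}$ with $i\neq j$; hence two pegs that do not form an independent set either lie in a common copy of $K_3$ or form a vertical pair $\{a_i,b_i\}$. The automorphism group of $P_2 \mybox K_3$ contains an $S_3$ permuting the indices $\{1,2,3\}$ simultaneously in both copies, together with the involution swapping $A$ and $B$. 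Modulo this group, a starting configuration is determined by the pair (number of pegs in $A$, number of pegs in $B$) — which, since each copy has between one and two pegs, is $(1,1)$, $(1,2)$, or $(2,2)$ up to the swap — together with the relative position of the holes, leaving five configurations in all. In every one of them there is an available first jump: either some copy has two pegs (an adjacent pair in that triangle), or the configuration is $(1,1)$ and the non-independence hypothesis forces a vertical pair.

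The observation that handles the last sentence of the statement is that once a two-peg configuration is reached, the two pegs are adjacent (as $\alpha=2$), so they are either both in one copy — say $\{a_1,a_2\}$ — or a vertical pair — say $\{a_1,b_1\}$. In the first case a single further jump can place the surviving peg at $a_3$, at $b_1$, or at $b_2$, and in each case one copy of $K_3$ is then empty; in the second case a single jump can leave the peg at $a_2$, $a_3$, $b_2$, or $b_3$, again emptying one copy. Thus from any two-peg configuration we may finish with one copy empty, with at least two choices for the location of the surviving peg, and we may moreover choose which copy is emptied.

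It remains, for each of the five configurations, to give jumps reducing to a two-peg configuration, or directly to a configuration with two pegs on one copy and the other copy empty, which is already of the required form. The $(1,1)$ case is immediate, since the start already consists of a vertical pair. For the others a short computation suffices; for example, in the $(2,2)$ case with holes at $a_3$ and $b_3$, the jumps $a_1a_2a_3$ then $b_1b_2b_3$ leave the vertical pair $\{a_3,b_3\}$, while with holes at $a_3$ and $b_2$ the jumps $b_1a_1a_3$ then $b_3a_3a_1$ leave $\{a_1,a_2\}$ in copy $A$ with copy $B$ empty; the two $(1,2)$ configurations are handled by a single analogous jump.

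The step needing the most care is the $(2,2)$ case with non-aligned holes, where a careless choice of jumps can strand one peg in each copy — two non-adjacent vertices with no available move — which fails the conclusion. The remedy is to first use a vertical jump to migrate a peg out of the copy one intends to empty and only afterward collapse within a single copy; in writing the proof one must verify, case by case, that the chosen order never reaches a one-peg-in-each-copy dead end before the desired configuration.
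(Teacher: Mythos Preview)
Your argument is the same case analysis as the paper's---split by the number of pegs in each triangle and exhibit explicit jumps---with the added modular step of first reducing to an adjacent two-peg configuration and then handling all such configurations uniformly. The paper instead fixes the copy $T_1$ to be cleared and treats each of the four $(|T_1|,|T_2|)$ peg-count cases directly; either organization works, and your explicit jump sequences check out.

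The one genuine error is the claim ``once a two-peg configuration is reached, the two pegs are adjacent (as $\alpha=2$).'' The justification is backwards: $\alpha(P_2\mybox K_3)=2$ means precisely that non-adjacent two-element sets such as $\{a_1,b_2\}$ \emph{do} exist, and a careless jump sequence can terminate at one---as your own final paragraph concedes. Adjacency of the intermediate two-peg configuration is not automatic; it must be (and, in your worked examples, is) arranged by the choice of jumps. Drop the ``as $\alpha=2$'' parenthetical, state instead that the jump sequences below are chosen so that every two-peg configuration reached has adjacent pegs, and make the verification in your last paragraph the substance of that claim rather than a caveat.
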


\begin{figure} \centering
\begin{subfigure}[b]{.5\linewidth} \centering
\begin{tikzpicture}[scale = .75]
\path (60:1.3cm) coordinate (X1); \path (180:1.3cm) coordinate (X2); \path (300:1.3cm) coordinate (X3);
\foreach \i in {1,2,3} {\path (X\i)++(2.5,0) coordinate (Y\i); \draw (X\i) -- (Y\i);}
\textcolor{white}{\draw[third arrow] (Y1) -- (Y3) node [pos=0.62, right, scale=.75] {$1$}; }
\draw (X1) -- (X2) -- (X3) -- cycle; 
\draw (Y1) -- (Y2) -- (Y3) -- cycle; 
\draw[third arrow] (Y1) -- (Y2) node [pos=0.62, below, scale=.75] {$1$}; \draw[third arrow] (Y2) -- (Y1) node [pos=0.72, below, scale=.75] {$2$};
\fill (X1) circle (2.66pt); \fill (X2) circle (2.66pt); \fill[fill=white,draw=black] (X3) circle (2.66pt); 
\fill (Y1) circle (2.66pt); \fill (Y2)[fill=white, draw = black] circle (2.66pt); \fill (Y3) circle (2.66pt); 
\end{tikzpicture}
\caption{\textcolor{white}{**}}
\label{k3a}
\end{subfigure}%
~\qquad
\begin{subfigure}[b]{.5\linewidth} \centering
\begin{tikzpicture}[scale=.75]
\path (60:1.3cm) coordinate (X1); \path (180:1.3cm) coordinate (X2); \path (300:1.3cm) coordinate (X3);
\foreach \i in {1,2,3} {\path (X\i)++(2.5,0) coordinate (Y\i); \draw (X\i) -- (Y\i);}
\textcolor{white}{\draw[third arrow] (Y1) -- (Y3) node [pos=0.62, right, scale=.75] {$1$}; }
\draw (X1) -- (X2) -- (X3) -- cycle; 
\draw (Y1) -- (Y2) -- (Y3) -- cycle; 
\draw[third arrow] (X2) -- (Y2) node [pos=0.62, below, scale=.75] {$1$}; 
\fill (X1) circle (2.66pt); \fill (X2) circle (2.66pt); \fill[fill=white,draw=black] (X3) circle (2.66pt); 
\fill (Y1) circle (2.66pt); \fill[fill=white,draw = black] (Y2) circle (2.66pt); \fill[fill=white,draw=black] (Y3) circle (2.66pt); 
\end{tikzpicture}
\caption{\textcolor{white}{**}}
\label{k3b}
\end{subfigure} 

\begin{subfigure}[b]{.5\linewidth} \centering
\begin{tikzpicture}[scale=.75]
\path (60:1.3cm) coordinate (X1); \path (180:1.3cm) coordinate (X2); \path (300:1.3cm) coordinate (X3);
\foreach \i in {1,2,3} {\path (X\i)++(2.5,0) coordinate (Y\i); \draw (X\i) -- (Y\i);}
\draw (X1) -- (X2) -- (X3) -- cycle; 
\draw (Y1) -- (Y2) -- (Y3) -- cycle; 
\draw[third arrow] (Y1) -- (Y3) node [pos=0.62, right, scale=.75] {$1$}; 
\fill (X1) circle (2.66pt); \fill[fill=white,draw=black] (X2) circle (2.66pt); \fill[fill=white,draw=black] (X3) circle (2.66pt); 
\fill (Y1) circle (2.66pt); \fill (Y2) circle (2.66pt); \fill[fill=white,draw = black] (Y3) circle (2.66pt); 
\end{tikzpicture}
\caption{\textcolor{white}{**}}
\label{k3c}
\end{subfigure}%
~\qquad
\begin{subfigure}[b]{.5\linewidth} \centering
\begin{tikzpicture}[scale=.75]
\path (60:1.3cm) coordinate (X1); \path (180:1.3cm) coordinate (X2); \path (300:1.3cm) coordinate (X3);
\foreach \i in {1,2,3} {\path (X\i)++(2.5,0) coordinate (Y\i); \draw (X\i) -- (Y\i);}
\draw (X1) -- (X2) -- (X3) -- cycle; 
\draw (Y1) -- (Y2) -- (Y3) -- cycle; 
\draw[third arrow] (Y3) -- (Y1) node [pos=0.62, right, scale=.75] {$1$}; \draw[third arrow] (Y1) -- (Y3) node [pos=0.62, right, scale=.75] {$2$}; 
\fill (X1) circle (2.66pt); \fill[fill=white,draw=black] (X2) circle (2.66pt); \fill[fill=white,draw=black] (X3) circle (2.66pt); 
\fill[fill=white,draw=black] (Y1) circle (2.66pt); \fill (Y2) circle (2.66pt); \fill (Y3) circle (2.66pt); 
\end{tikzpicture}
\caption{\textcolor{white}{**}}
\label{k3d}
\end{subfigure}
\caption{Cases for peg placement in the proof of Lemma~\ref{k3lem}.} 
\label{k3}
%%%add numerical labels 
%%%check how to change font to match
\end{figure}
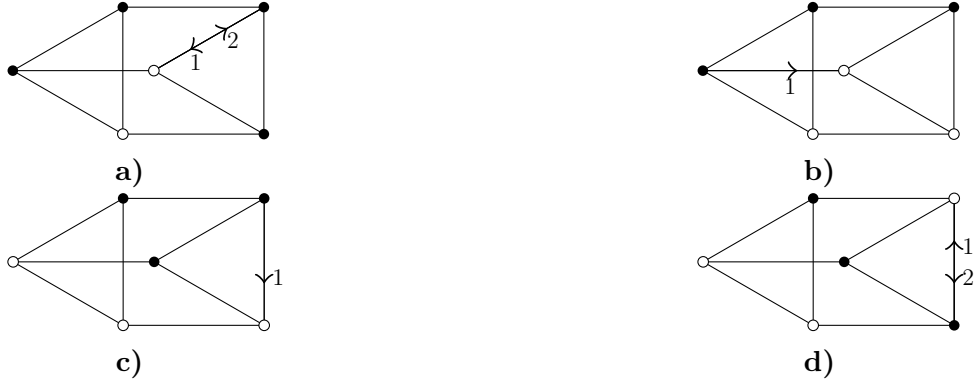

\begin{proof}
Let $T_1$ and $T_2$ be the copies of $K_3$, where $T_1$ is the copy we wish to clear. In each subfigure of Figure~\ref{k3}, $T_1$ is on the left and $T_2$ is on the right. The arrows give the second edge involved in the jump(s) and are numbered to indicate order. 

We first consider the case where there are two pegs on $T_1$. If there are two pegs on $T_2$, then we make two jumps.  The first is within $T_2$ leaving two holes in $T_2$ and allowing us to jump one peg in $T_1$ over the other into $T_2$. An example of this is shown in Figure~\ref{k3a}. If there is one peg in $T_2$, then we start with two holes in $T_2$ and we jump one peg in $T_1$ over the other into $T_2$. An example of this configuration is shown in Figure~\ref{k3b}. Either way, we end with two pegs on $T_2$ and no pegs on $T_1$. 

We next consider the case when there is one peg in $T_1$. First suppose that there are two pegs on $T_2$. By symmetry, either the peg on $T_1$ is adjacent to a peg on $T_2$ or it is not. These cases are illustrated in Figures~\ref{k3c}~and~\ref{k3d}. If possible, we jump the peg on $T_1$ over a peg on $T_2$ into $T_2$. This leaves $T_1$ with no pegs and and $T_2$ with two pegs. If this jump is unavailable, we instead make a jump within $T_2$. This leaves a peg in $T_2$ adjacent to the peg in $T_1$. We can then jump the peg in $T_1$ over the peg in $T_2$ into either hole in $T_2$. Suppose instead that $T_2$ starts with one peg. By our assumption that the locations of the pegs do not form an independent set, the peg on $T_2$ is adjacent to the peg on $T_1$. Jump the peg on $T_1$ over the peg in $T_2$ into either hole in $T_2$. 
\end{proof}

We can now use the lemmas to find the fool's solitaire number of $G \mybox K_k$ when $k \ge 3$. Berge~\cite{B}
proved that $\alpha(G \mybox K_k) = |V(G)|$ if and only if $k \ge X(G)$.

\begin{theorem}
Let $G$ be a connected graph. If $k \ge 3$, then $F(G \mybox K_k) = \alpha(G \mybox K_k)$. In particular, $F(G \mybox K_k) = |V(G)|$ when $k\ge\chi(G)$.
\label{cartesianthm}
\end{theorem}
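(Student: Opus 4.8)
The plan is to show $F(G \mybox K_k) \ge \alpha(G \mybox K_k)$, since the reverse inequality is trivial, and then to invoke Berge's theorem for the special case $k \ge \chi(G)$. By Proposition~\ref{important}, it suffices to fix a maximum independent set $S$ of $G \mybox K_k$, place holes exactly at the vertices of $S$ and pegs everywhere else, and exhibit a sequence of jumps reducing this configuration to a single peg. I would organize the copies of $K_k$ indexed by $V(G)$: for each vertex $u$ of $G$, write $T_u$ for the copy $\{u\} \times V(K_k)$. Since $S$ is independent, each $T_u$ contains at most one hole of $S$ (two holes in the same $K_k$-fiber would be adjacent), and in fact each fiber starts with at least $k-1$ pegs; in particular every fiber has at least one peg and, because $k \ge 3$, removing one peg still leaves a hole available within that fiber once we start playing.

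The core of the argument is a ``cleaning'' procedure that processes the fibers one at a time along a spanning tree (or any connected traversal) of $G$, using the $G$-edges to move pegs between adjacent fibers. Concretely, I would pick a leaf-first ordering of a spanning tree of $G$ and, for each fiber $T_u$ other than the last, dump all of $T_u$'s pegs into a designated neighboring fiber $T_{u'}$ that has not yet been cleared, leaving $T_u$ empty: this is exactly the role played by Lemma~\ref{k3lem} when $k=3$ (which guarantees we can empty one copy of $K_3$ while keeping a peg and a hole in the adjacent copy, given the non-independence hypothesis that holds here because the two fibers together carry at least three pegs on only $2k = 6$ vertices), and by Lemmas~\ref{nice5+} and~\ref{nice4} when $k \ge 4$ (solve the fiber down to one peg, then jump that peg across a $G$-edge into the neighbor, using the flexibility in the final peg's location to keep a hole available in the receiving fiber). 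One must check that the receiving fiber $T_{u'}$ always has at least one hole to receive the incoming peg — this is where $k \ge 3$ is essential, since after an internal jump in $T_{u'}$ we have created holes there, and it is where the $k=2$ case genuinely breaks. Once all fibers but the last have been emptied, all pegs lie in a single copy of $K_k$, and a peg solitaire game on $K_k$ finishes it off, leaving one peg.

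I expect the main obstacle to be the bookkeeping that guarantees, at each stage, that the three hypotheses of the relevant lemma are met for the pair (current fiber, receiving fiber): namely that the current fiber still has a peg, that the receiving fiber has both a peg and a hole (or can be made to via an internal jump), and — for $k=3$ — that the combined peg set on the two fibers is not independent. The non-independence is the delicate point: it follows from a counting bound (two $K_3$'s span $6$ vertices, and after cleaning the earlier fibers the two active fibers jointly hold strictly more than $\alpha$ of those $6$ vertices' worth of pegs, forcing two adjacent pegs), but I would want to state this carefully, perhaps by maintaining the invariant that every not-yet-cleared fiber still contains at least $k-1$ pegs until it is processed, plus whatever pegs have been pushed into it. The final sentence of the theorem is then immediate: by Berge's theorem $\alpha(G \mybox K_k) = |V(G)|$ precisely when $k \ge \chi(G)$, so $F(G \mybox K_k) = |V(G)|$ in that range.
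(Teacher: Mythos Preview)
Your overall architecture matches the paper's: pick a maximum independent set $S$, apply Proposition~\ref{important}, then sweep a spanning tree of $G$, emptying each leaf fiber into its parent using Lemmas~\ref{nice5+}, \ref{nice4}, and \ref{k3lem}. Two points need repair, however.

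First, a minor omission: some fibers $T_u$ may contain \emph{no} vertex of $S$ at all, so they start with $k$ pegs and zero holes. You cannot ``solve the fiber down to one peg'' nor ``make an internal jump in $T_{u'}$'' from such a position. The paper handles this with a preliminary Phase~1: whenever a full fiber $K(v)$ is adjacent (in $G$) to a fiber $K(u)$ that does have a hole (such a neighbor exists by maximality of $S$), jump a peg from $K(v)$ over a peg in $K(u)$ into that hole, giving both fibers exactly one hole. Only after every fiber has exactly one hole does the spanning-tree sweep begin.

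Second, and more seriously, your treatment of the $k=3$ case does not go through. The invariant you propose---that every not-yet-cleared fiber retains at least $k-1$ pegs plus whatever has been pushed in---is false: applying Lemma~\ref{k3lem} to the pair $(K(x),K(v))$ can leave $K(v)$ with a \emph{single} peg (see Figures~\ref{k3c}--\ref{k3d}). Consequently, when you later try to push from $K(v)$ into its parent $K(u)$, it can happen that $K(u)$ has also been reduced to a single peg (via an earlier child $K(y)$), and those two lone pegs sit in different copies of $G$, forming an independent set of size $2$ in $P_2\mybox K_3$; your counting bound does not exclude this. The paper's fix is not a counting argument but a look-ahead: Lemma~\ref{k3lem} guarantees \emph{two} possible locations for the surviving peg whenever only one peg remains. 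Since any two $2$-element subsets of $\{1,2,3\}$ intersect, one can retroactively choose the final-peg positions in the applications of Lemma~\ref{k3lem} to $(K(x),K(v))$ and $(K(y),K(u))$ so that the single pegs on $K(v)$ and $K(u)$ land in the same copy of $G$ and are therefore adjacent. This is the missing idea you need for $k=3$.
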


\begin{proof}
We will denote the copy of $K_k$ that contains all copies of a vertex $v \in V(G)$ by $K(v)$. The vertices of $K(v)$ will be $\{v_1,\ldots,v_k\}$, where $v_i$ plays the role of $v$ in the $i$th copy of $G$. By Proposition~\ref{important}, it suffices to show that some configuration with holes at a maximum independent set can be reduced to a single peg. Start with holes at a maximum independent set $S$ in $G \mybox K_k$. Note that $S$ has at most one vertex in each copy of $K_k$. We preform jumps in two phases. 

Phase 1 achieves a configuration in which each copy of $K_k$ has exactly one hole. Since $S$ is a maximum independent set, for any copy $K(v)$ of $K_k$ having no hole, there is an edge $uv \in E(G)$ such that $K(u)$ has one hole. Jump a peg from $K(v)$ over a peg in $K(u)$ to the hole in $K(u)$. Now both $K(v)$ and $K(u)$ have one hole. The process continues until each copy of $K_k$ has exactly one hole, ending Phase 1.

For Phase 2, let $T$ be a spanning tree of $G$. 

Case 1: $k \ge 4$. Let $v$ be a leaf of $T$, and let $u$ be the neighbor of $v$ in $T$. Let $u_i$ be the vertex of $K(u)$ that has a hole. Since $K(v)$ has a single hole, we may solve $K(v)$. Because $k \ge 4$, by Lemmas~\ref{nice5+} and~\ref{nice4}, we may choose the location of the  final peg on $K(v)$ to be $v_j$ with $i \neq j$. We can then jump $v_ju_ju_i$ (see Figure~\ref{cartesianpic1}). Now $K(u)$ has only the hole at $u_j$. Remove $v$ from $T$ and repeat this process with a new leaf. Continue until the remaining pegs lie in a single complete subgraph, which is solvable. 

Case 2: $k=3$. Because we cannot control the location of the final peg in each copy of $K_3$, the previous strategy does not work, and we instead use Lemma~\ref{k3lem}. When $K(v)$ has one or two pegs, Lemma~\ref{k3lem} allows us to remove all pegs from $K(v)$ and leave one or two pegs on $K(u)$. Remove $v$ from $T$ and repeat with a new leaf. Continue until the remaining pegs lie in a single copy of $K_3$, which is solvable. The only possible problem with this strategy is that Lemma~\ref{k3lem} does not apply when each of $K(v)$ and $K(u)$ has only one peg and they sit at nonadjacent vertices. Since each copy of $K_3$ starts with at least two pegs, this situation arises only for adjacent vertices of $T$ from which neighbors have been eliminated.  

Suppose that $K(v)$ most recently received pegs from $K(x)$ and $K(u)$ most recently received pegs from $K(y)$ (see Figure~\ref{cartesianpic2}. In the applications of Lemma~\ref{k3lem} to the pair $K(x)$ and $K(v)$ and the pair $K(y)$ and $K(u)$, there were two choices for the location of the remaining peg on $K(v)$ and on $K(u)$. Since two element subsets of a set of three indices have a common element, we may choose the moves in the application of Lemma~\ref{k3lem} to $K(x)$ and $K(v)$ and to $K(y)$ and $K(u)$ so that the pegs on $K(v)$ and $K(u)$ are adjacent. Additionally, choosing these moves does not affect future applications of Lemma~\ref{k3lem} involving $K(u)$, since we have two choices for the location of the peg resulting from the application of Lemma~\ref{k3lem} to $K(v)$ and $K(u)$. 
\end{proof}

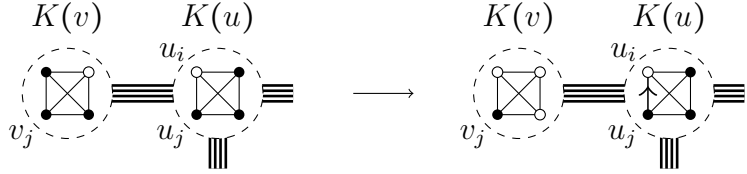
\begin{figure} \centering
\begin{tikzpicture}
\path (0,0) coordinate (X1);
\path (2,0) coordinate (X2); 
\path (3,0) coordinate (X3); 
\path (2,-1) coordinate (X4); 
\draw[line width = 7pt] (X1) -- (X2); \draw[color=white,line width=5pt] (X1) -- (X2); \draw[line width = 3pt] (X1) -- (X2); \draw[color = white, line width=1pt] (X1) -- (X2);
\draw[line width = 7pt] (X2) -- (X3); \draw[color=white,line width=5pt] (X2) -- (X3); \draw[line width = 3pt] (X2) -- (X3); \draw[color = white, line width=1pt] (X2) -- (X3);
\draw[line width = 7pt] (X2) -- (X4); \draw[color=white,line width=5pt] (X2) -- (X4); \draw[line width = 3pt] (X2) -- (X4); \draw[color = white, line width=1pt] (X2) -- (X4);
\foreach \i in {1,2} {\fill[fill=white,draw=black,dashed] (X\i) circle (.6cm);}
\foreach \i in {1,2} {\foreach \j in {1,...,4} {\path (X\i)++(90*\j+45:.4cm) coordinate (Y\j\i);}}
\foreach \i in {1,2} {\draw (Y1\i) -- (Y2\i) -- (Y3\i) -- (Y4\i) -- cycle; 
                          \draw (Y1\i) -- (Y3\i); \draw (Y2\i) -- (Y4\i);
                          \fill (Y1\i) circle (2pt); \fill (Y2\i) circle (2pt);
                          \fill (Y3\i) circle (2pt); \fill (Y4\i) circle (2pt);}
\fill[draw=black,fill=white] (Y12) circle (2pt); 
\fill[draw=black,fill=white] (Y41) circle (2pt); 
\node[above left] at (Y12) {$u_i$}; 
\node[below left] at (Y22) {$u_j$}; 
\node[below left] at (Y21) {$v_j$}; 
\node at (0,1) {$K(v)$}; 
\node at (2,1) {$K(u)$};
\draw[->] (3.8,0) -- (4.6,0); 
\path (6,0) coordinate (U1);
\path (8,0) coordinate (U2); 
\path (9,0) coordinate (U3); 
\path (8,-1) coordinate (U4); 
\draw[line width = 7pt] (U1) -- (U2); \draw[color=white,line width=5pt] (U1) -- (U2); \draw[line width = 3pt] (U1) -- (U2); \draw[color = white, line width=1pt] (U1) -- (U2);
\draw[line width = 7pt] (U2) -- (U3); \draw[color=white,line width=5pt] (U2) -- (U3); \draw[line width = 3pt] (U2) -- (U3); \draw[color = white, line width=1pt] (U2) -- (U3);
\draw[line width = 7pt] (U2) -- (U4); \draw[color=white,line width=5pt] (U2) -- (U4); \draw[line width = 3pt] (U2) -- (U4); \draw[color = white, line width=1pt] (U2) -- (U4);
\foreach \i in {1,2} {\fill[fill=white,draw=black,dashed] (U\i) circle (.6cm);}
\foreach \i in {1,2} {\foreach \j in {1,...,4} {\path (U\i)++(90*\j+45:.4cm) coordinate (V\j\i);}}
\draw[third arrow] (V22) -- (V12); 
\foreach \i in {1,2} {\draw (V1\i) -- (V2\i) -- (V3\i) -- (V4\i) -- cycle; 
                          \draw (V1\i) -- (V3\i); \draw (V2\i) -- (V4\i);
                          \fill (V1\i) circle (2pt); \fill (V2\i) circle (2pt);
                          \fill (V3\i) circle (2pt); \fill (V4\i) circle (2pt);}
\fill[draw=black,fill=white] (V12) circle (2pt); 
\fill[draw=black,fill=white] (V41) circle (2pt); \fill[draw=black,fill=white] (V31) circle (2pt);  \fill[draw=black,fill=white] (V11) circle (2pt); 
\node[above left] at (V12) {$u_i$}; 
\node[below left] at (V22) {$u_j$}; 
\node[below left] at (V21) {$v_j$}; 
\node at (6,1) {$K(v)$}; 
\node at (8,1) {$K(u)$};
\end{tikzpicture}
\caption{Case 1 of Phase 2 in the proof of Theorem~\ref{cartesianthm}.}
\label{cartesianpic1}
\end{figure}

\begin{figure} \centering
\begin{tikzpicture}
\path (0,0) coordinate (X1);
\path (2,0) coordinate (X2); 
\path (4,0) coordinate (X3); 
\path (6,0) coordinate (X4); 
\draw[line width=5pt] (X1) -- (X4); \draw[color=white,line width = 3pt] (X1) -- (X4); \draw[line width=1pt] (X1) -- (X4);
\foreach \i in {1,...,4} {\fill[fill=white,draw=black,dashed] (X\i) circle (.6cm);}
\foreach \i in {1,...,4} {\foreach \j in {1,...,3} {\path (X\i)++(120*\j-30:.4cm) coordinate (Y\j\i);}}
\foreach \i in {1,...,4} {\draw (Y1\i) -- (Y2\i) -- (Y3\i) -- cycle; 
                          \fill[draw=black,fill=white] (Y1\i) circle (2pt); 
													\fill[draw=black,fill=white] (Y2\i) circle (2pt); 
													\fill[draw=black,fill=white] (Y3\i) circle (2pt);}
\fill (Y12) circle (2pt); 
\fill (Y23) circle (2pt); 
\node at (0,1) {$K(x)$}; 
\node at (2,1) {$K(v)$}; 
\node at (4,1) {$K(u)$}; 
\node at (6,1) {$K(y)$}; 
\draw[->] (.4, -.8) -- (1.6,-.8); 
\draw[->] (5.6,-.8) -- (4.4,-.8); 
\draw[->, dashed] (2.4,-.8) -- (3.6,-.8); 
\end{tikzpicture}
\caption{Possible problem in Case 2 of Phase 2 in the proof of Theorem~\ref{cartesianthm}.}
\label{cartesianpic2}
\end{figure}
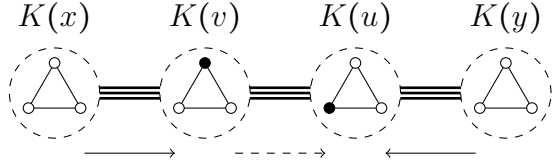

The methods above do not work when $k=2$. As a corollary to Theorem~\ref{k2sad}, we show that $k \ge 3$ is required to guarantee that $F(G \mybox K_k) = \alpha(G \mybox K_k)$ for every graph $G$. 

\begin{lemma}\label{IHaveCors}
If $H$ is a connected, $n$-vertex, bipartite graph having a Hamiltonian path and at least four vertices, then $F(H) \ge \lceil \frac{n}{2} \rceil - 1=\alpha(H)-1$.
\end{lemma}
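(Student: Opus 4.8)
The plan is to use Proposition~\ref{important}: it suffices to exhibit an independent set $T$ with $|T| = \lceil n/2\rceil - 1$ such that the configuration with holes exactly at $T$ (and pegs at the remaining $\lfloor n/2\rfloor + 1$ vertices) can be reduced to a single peg; then $F(H) \ge |T|$. I would first record that $\alpha(H) = \lceil n/2\rceil$. Since $H$ is connected and bipartite it has a unique bipartition, and because a Hamiltonian path $v_1 v_2 \cdots v_n$ alternates between the two classes, those classes have sizes $\lceil n/2\rceil$ and $\lfloor n/2\rfloor$; on the other hand an independent set contains no two consecutive vertices of the path, so has at most $\lceil n/2\rceil$ vertices. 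Hence $\lceil n/2\rceil - 1 = \alpha(H) - 1$, as claimed.

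Next I would write down the set and the jumps explicitly. Let $T$ be the odd-indexed vertices of the Hamiltonian path starting from $v_3$, that is $\{v_3, v_5, v_7, \dots\}$, ending at $v_n$ if $n$ is odd and at $v_{n-1}$ if $n$ is even. These all lie in one class of the bipartition, so $T$ is independent, and counting gives $|T| = \lceil n/2\rceil - 1$. In the complementary configuration the pegs sit at $v_1$, $v_2$, every even-indexed vertex, and additionally $v_n$ when $n$ is even. Now \emph{march a peg along the path}: perform $v_1 v_2 v_3$, then $v_3 v_4 v_5$, then $v_5 v_6 v_7$, and so on, the generic step being the jump $v_{2i-1}\,v_{2i}\,v_{2i+1}$, which uses the peg just deposited at $v_{2i-1}$, the still-untouched original peg at the even vertex $v_{2i}$, and the hole at $v_{2i+1}\in T$; all edges used are edges of the Hamiltonian path. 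When $n$ is odd the last of these jumps is $v_{n-2}\,v_{n-1}\,v_n$ and leaves a single peg at $v_n$. When $n$ is even the marching stalls with two adjacent pegs at $v_{n-1}$ and $v_n$; but $v_{n-2}$ has by then been vacated, so the extra jump $v_n\,v_{n-1}\,v_{n-2}$ (again a path edge) finishes with one peg at $v_{n-2}$.

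What remains is routine: maintaining the invariant that before the jump $v_{2i-1}\,v_{2i}\,v_{2i+1}$ the vertex $v_{2i-1}$ holds a peg, $v_{2i}$ still holds its original peg because no earlier jump touched an index $\ge 2i$, and $v_{2i+1}$ is still a hole, together with the bookkeeping of peg counts and of $|T|$. The one genuinely delicate point is the even case: a naive marching leaves two pegs stranded at the end of the path, and the observation needed is that these can be legally jumped back into the vertex $v_{n-2}$ that was emptied two steps earlier, so, pleasantly, the argument never appeals to any edge of $H$ beyond those of the Hamiltonian path. Finally, the construction should be checked to go through for every $n \ge 4$ (for $n \le 3$ the inequality is trivial or immediate), which accounts for the hypothesis $n\ge 4$.
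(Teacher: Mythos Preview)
Your proposal is correct and matches the paper's proof essentially verbatim: the same terminal set $T=\{v_3,v_5,\dots\}$, the same marching sequence of jumps $v_{2i-1}v_{2i}v_{2i+1}$ along the Hamiltonian path, and the same clean-up jump $v_n v_{n-1} v_{n-2}$ in the even case. Your additional remarks on $\alpha(H)=\lceil n/2\rceil$ and the invariant maintained during the march are a bit more explicit than the paper, but the argument is the same.
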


\begin{proof}
Let $\{v_1,\ldots,v_n\}$ in order form a Hamiltonian path in $H$. Because $H$ is bipartite and has a Hamiltonian path, $\alpha(H)=\left\lceil \frac{n}{2} \right\rceil$. To show that $F(H) \ge \alpha(H) - 1$, we claim that the set of odd-indexed vertices other than $v_1$ forms a terminal state. To show this we solve the game that starts with pegs at the even-indexed vertices and at $v_1$. To solve this configuration, jump the peg at $v_1$ over the pegs at the even-indexed vertices from smallest index to largest index. If $n$ is odd, then the process ends with this peg at $v_n$ and no other pegs. If $n$ is even, then the process ends with this peg at $v_{n-1}$ and a peg at $v_n$. Performing the jump $v_n v_{n-1} v_{n-2}$ leaves a single peg.
\end{proof}

\begin{theorem}
\label{k2sad}
If $G$ is a connected, bipartite graph having a Hamiltonian path and at least two vertices, then $F(G \mybox P_k) = \alpha(G\mybox P_k)-1$ for $k\ge2$. 
\end{theorem}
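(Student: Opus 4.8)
The plan is to prove the two inequalities $F(G \mybox P_k) \le \alpha(G \mybox P_k) - 1$ and $F(G \mybox P_k) \ge \alpha(G \mybox P_k) - 1$ separately. For the upper bound I would invoke Proposition~\ref{complement}: it suffices to show that $\alpha(G \mybox P_k) \le |V(G \mybox P_k)| - 2$ and that the complement of every maximum independent set is independent. Since $G$ is bipartite with bipartition $(A,B)$ and $P_k$ is bipartite, $G \mybox P_k$ is bipartite, so its maximum independent sets are exactly the larger side of any bipartition if the two sides differ in size, and both sides if they are equal; in either case the complement of a maximum independent set is an independent set (the other side). The condition $\alpha \le |V| - 2$ holds because $G \mybox P_k$ is connected with at least $4$ vertices and bipartite, so neither side is a single vertex (each side has at least two vertices once $k \ge 2$ and $|V(G)| \ge 2$). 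This gives $F(G \mybox P_k) \le \alpha(G \mybox P_k) - 1$.

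For the lower bound, I would build an explicit terminal state of size $\alpha(G \mybox P_k) - 1$ using Proposition~\ref{important}, i.e.\ exhibit a starting configuration of holes (on the intended terminal state) and a sequence of jumps reducing it to one peg. The key structural fact is that $G \mybox P_k$ contains a Hamiltonian path: $G$ has a Hamiltonian path $v_1 v_2 \cdots v_n$, and one can "snake" through the $k$ copies of $G$ — traverse copy $1$ along $v_1,\dots,v_n$, step to copy $2$ at $v_n$, traverse copy $2$ back along $v_n,\dots,v_1$, step up to copy $3$, and so on — producing a Hamiltonian path of the connected bipartite graph $G \mybox P_k$ on $n k \ge 4$ vertices. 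Now apply Lemma~\ref{IHaveCors} to $H = G \mybox P_k$: since $G \mybox P_k$ is connected, bipartite, has a Hamiltonian path, and has at least four vertices, we get $F(G \mybox P_k) \ge \lceil nk/2 \rceil - 1 = \alpha(G \mybox P_k) - 1$.

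Combining the two bounds yields $F(G \mybox P_k) = \alpha(G \mybox P_k) - 1$, as claimed. The step I expect to require the most care is verifying the Hamiltonian-path construction in $G \mybox P_k$ and confirming that the "snake" genuinely uses only edges of the cartesian product (each step within a fixed copy of $G$ is along an edge of $G$, and each step between consecutive copies keeps the $G$-coordinate fixed while moving along an edge of $P_k$); I should also double-check the two parity subcases in $\lceil nk/2 \rceil = \alpha(G \mybox P_k)$, which follows because a bipartite graph with a Hamiltonian path on $m$ vertices has independence number $\lceil m/2 \rceil$. The upper-bound argument is essentially an unpacking of Proposition~\ref{complement} once bipartiteness is in hand, so the only subtlety there is ruling out the degenerate case where one side of the bipartition is a single vertex, which cannot happen under the hypotheses $|V(G)| \ge 2$ and $k \ge 2$.
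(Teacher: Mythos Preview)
Your lower bound is exactly the paper's: snake a Hamiltonian path through $G \mybox P_k$ and invoke Lemma~\ref{IHaveCors}. That part is fine.

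The upper bound, however, has a genuine gap. You assert that because $G \mybox P_k$ is bipartite, ``its maximum independent sets are exactly the larger side of any bipartition if the two sides differ in size, and both sides if they are equal.'' This is false for bipartite graphs in general, and it remains false even if you add the Hamiltonian-path hypothesis. The path $P_4$ on vertices $v_1 v_2 v_3 v_4$ is bipartite with a Hamiltonian path and has $\alpha(P_4)=2$, yet $\{v_1,v_4\}$ is a maximum independent set whose complement $\{v_2,v_3\}$ is \emph{not} independent. So Proposition~\ref{complement} does not follow from bipartiteness alone, and your remark that ``the upper-bound argument is essentially an unpacking of Proposition~\ref{complement} once bipartiteness is in hand'' is where the argument breaks.

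What is actually needed is the specific structural fact that in $G \mybox P_k$ (under the stated hypotheses on $G$ and $k$) the \emph{only} maximum independent sets are the two colour classes $S$ and $V(G\mybox P_k)\setminus S$. This is true, but it is the heart of the theorem, and the paper spends the bulk of its proof on it: after computing $|S|$, it takes an arbitrary maximum independent set $S'$ and, treating the cases $k$ odd and $k$ even separately, counts how many copies of each $v\in V(G)$ can appear in $S'$, then uses the Hamiltonian path in $G$ to rule out any pattern other than the two colour classes. Only after that is Proposition~\ref{complement} invoked. Your proposal skips this step entirely, so as written the upper bound is unproved; the missing work is exactly the uniqueness-of-maximum-independent-sets argument.
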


\begin{proof}
Let $X \cup Y$ be the bipartition of $G$. Without loss of generality, we may assume $|X| = \lceil \frac{n}{2} \rceil$, and $|Y| = \lfloor \frac{n}{2} \rfloor$, where $n = |V(G)|$. In $G \mybox P_k$ we have $k$ copies of $G$, say $G_1,\dots,G_k$, corresponding to the vertices of $P_k$. Let $S$ be the set of vertices of $X$ in $G_i$ for odd $i$ and vertices of $Y$ in $G_i$ for even $i$; $S$ is an independent set of size $\lceil\frac{k}2\rceil|X|+\lfloor\frac{k}2\rfloor|Y|$ in $G \Box P_k$. This forms a maximum independent set because an independent set can contain a copy of $v \in V(G)$ in at most one of $G_i$ and $G_{i+1}$. Note that $V(G \mybox P_k) - S$ is also an independent set. 

Let $S'$ be another independent set of size $\lceil\frac{k}2\rceil \lceil \frac{n}{2} \rceil+\lfloor\frac{k}2\rfloor \lfloor \frac{n}{2} \rfloor$. 

If $k$ is odd, then the only way for $\lceil \frac{k}{2} \rceil$ copies of a vertex of $G$ to appear in $S'$ is for them to appear in every odd-indexed copy of $G$. Thus, there are $\lceil \frac{n}{2} \rceil$ vertices of $G$ whose odd-indexed copies all appear in $S'$. These vertices cannot be consecutive on the Hamiltonian path $P$ in $G$. Furthermore, if more than two consecutive vertices along $P$ have only $\lfloor \frac{k}{2} \rfloor$ copies in $S'$, then $S'$ is too small. Furthermore, suppose that two consecutive vertices $x$ and $y$ along $P$ both have only $\lfloor \frac{k}{2} \rfloor$ copies in $S'$. If $x$ and $y$ are not the first or last two vertices of $P$, then their other neighbors along $P$ must both have $\lceil \frac{k}{2} \rceil$ copies in $S'$. This places the neighbors in the odd-indexed copies of $G$ which permits only even-indexed copies of $x$ and $y$ to appear in $S'$. However, since $S'$ is an independent set and $x$ and $y$ are adjacent in $G$, $S'$ cannot contain the copies of $x$ and $y$ in any single copy of $G$, so this restriction to the even-indexed copies of $x$ and $y$ means $S'$ cannot have the desired size. Suppose instead that $x$ is an endpoint of $P$. Then the neighbor of $y$ along $P$ other than $x$ must have $\lceil \frac{k}{2} \rceil$ copies in $S'$. This places the neighbor in the odd-indexed copies of $G$ which forces $y$ to appear only in even-indexed copies of $G$. Since $S'$ has $\lfloor \frac{k}{2} \rfloor$ copies of $y$, every copy of $x$ in $S'$ must be in an odd-indexed copy of $G$. Taking all odd-indexed copies of $x$ gives a larger independent set, contradicting the choice of $S'$. We conclude that every vertex in a largest partite set of $G$ appears $\lceil \frac{k}{2} \rceil$ times in $S'$, so that $S'$ is $S$ of $V(G \mybox P_k)-S$. 

If $k$ is even, then there $S'$ contains $\frac{k}{2}$ copies of every vertex. Hence, $S'$ contains either the odd-indexed copies or the even-indexed copies of a vertex. Along the Hamiltonian path, these must alternate, thus $S'$ is either $S$ or $V(G\mybox P_k)-S$.  

 As $G$ has at least two vertices and $k\ge2$, each of $S$ and $V(G \mybox P_k) - S$ has at least two vertices. Thus by Proposition~\ref{complement}, $F(G \mybox P_k) \le \alpha(G \mybox P_k) -1$. Furthermore, $G \mybox P_k$ is a connected, bipartite graph with a Hamiltonian path and at least four vertices, so by Theorem~\ref{IHaveCors}, $F(G \mybox P_k) = \alpha(G \mybox P_k) - 1$. 
\end{proof}

\begin{cor}
$F(G \mybox K_2) = \alpha(G \mybox K_2) -1$ if $G$ is a connected, bipartite graph having a Hamiltonian path and at least two vertices.
\end{cor}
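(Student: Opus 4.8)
The plan is to observe that the corollary is an immediate specialization of Theorem~\ref{k2sad}. The key point is simply that $K_2 = P_2$, so taking $k = 2$ in Theorem~\ref{k2sad} already gives the statement. Thus the ``proof'' amounts to noting this identification and invoking the theorem.

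More concretely: if $G$ is a connected bipartite graph having a Hamiltonian path and at least two vertices, then Theorem~\ref{k2sad} applies with $k = 2$, yielding $F(G \mybox P_2) = \alpha(G \mybox P_2) - 1$. Since $P_2$ and $K_2$ denote the same graph (the single edge on two vertices), we have $G \mybox P_2 = G \mybox K_2$, so $F(G \mybox K_2) = \alpha(G \mybox K_2) - 1$, as desired.

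There is no genuine obstacle here; the content of the corollary lives entirely in Theorem~\ref{k2sad}, and this is precisely the case that motivates the ``this behavior does not hold when $k = 2$'' remark in the introduction. The only thing worth being careful about is confirming that the hypotheses of Theorem~\ref{k2sad} are exactly the hypotheses stated in the corollary (connected, bipartite, Hamiltonian path, at least two vertices) and that $k = 2 \ge 2$ is permitted, which it is. So I would write a one-line proof:

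\begin{proof}
This is the case $k = 2$ of Theorem~\ref{k2sad}, since $P_2 = K_2$.
\end{proof}
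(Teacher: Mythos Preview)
Your proposal is correct and matches the paper's approach: the paper states the corollary immediately after Theorem~\ref{k2sad} with no separate proof, treating it as the $k=2$ instance of that theorem via the identification $P_2=K_2$.
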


\section{A Product Lower Bound}\label{LowerBound}

Beeler and Rodriguez \cite{BR} asked what can be said about the value of $F(G \mybox H)$ in terms of $F(G)$ and $F(H)$. We obtain a sufficient condition for $F(G \mybox H) \ge F(G) F(H)$. Let $N[v]$ denote the closed neighborhood $N(v)\cup\{v\}$ of a vertex $v$. We say a graph $G$ is \emph{freely neighborhood-solvable} if, for every $v \in V(G)$, $G$ is solvable from the position with a single hole at $v$ so that the final peg is in $N[v]$. Graphs previously known to be freely solvable that have this stronger property include complete graphs, even cycles of length up to 10 ($C_{12}$ is not), the platonic solids, and the Petersen graph. Beeler and Gray~\cite{BG} found that 103 of the 112 six-vertex graphs and 820 of the 853 seven-vertex graphs are freely solvable. Computer search shows that 95 of these 103 and 796 of these 820 are freely neighborhood-solvable. Additionally, over 98\% of eight-vertex and nine-vertex graphs are freely neighborhood-solvable. 

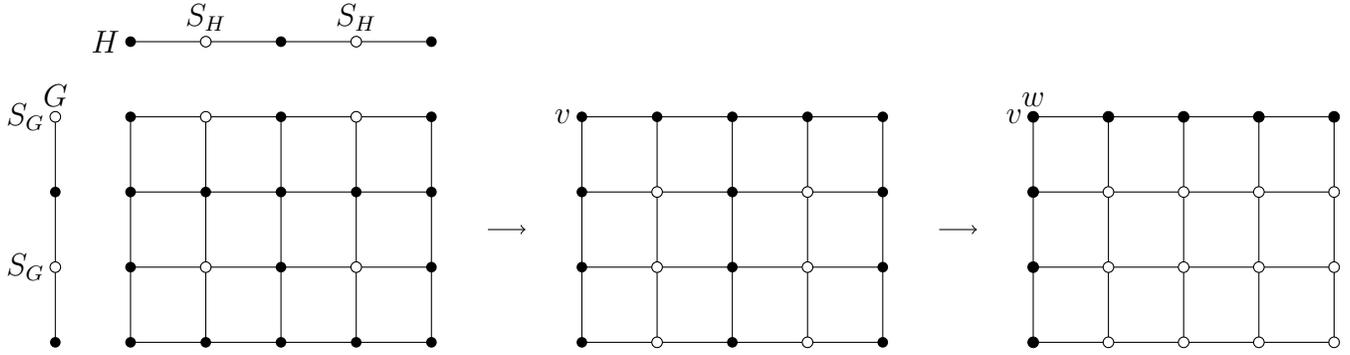
\begin{figure} 
\makebox[\textwidth][c]{
\begin{tikzpicture}
\foreach \i in {0,...,5} {\draw (\i,0) -- (\i,1) -- (\i,2) -- (\i,3);}
\foreach \j in {0,...,4} {\draw (1,\j) -- (2,\j) -- (3,\j) -- (4,\j) -- (5,\j);}
\foreach \i in {0,...,5} {\foreach \j in {0,...,4} {\fill (\i,\j) circle (2pt);}}
\foreach \i in {0,2,4} {\foreach \j in {1,3,4} {\fill[fill=white,draw=black] (\i,\j) circle (2pt);}}
\node[left] at (1,4) {$H$}; 
\node[above] at (2,4) {$S_H$};
\node[above] at (4,4) {$S_H$};
\node[above] at (0,3) {$G$};
\node[left] at (0,3) {$S_G$};
\node[left] at (0,1) {$S_G$}; 
\fill[fill=white] (0,4) circle (3pt); 
\draw[->] (5.75,1.5) -- (6.25,1.5); 
\foreach \i in {7,...,11} {\draw (\i,0) -- (\i,1) -- (\i,2) -- (\i,3);}
\foreach \j in {0,...,3} {\draw (7,\j) -- (8,\j) -- (9,\j) -- (10,\j) -- (11,\j);}
\foreach \i in {7,...,11} {\foreach \j in {0,...,3} {\fill (\i,\j) circle (2pt);}}
\foreach \i in {8,10} {\foreach \j in {0,1,2} {\fill[fill=white,draw=black] (\i,\j) circle (2pt);}}
\node[left] at (7,3) {$v$};  
\draw[->] (11.75,1.5) -- (12.25,1.5); 
\foreach \i in {13,...,17} {\draw (\i,0) -- (\i,1) -- (\i,2) -- (\i,3);}
\foreach \j in {0,...,3} {\draw (13,\j) -- (14,\j) -- (15,\j) -- (16,\j) -- (17,\j);}
\foreach \i in {13,...,17} {\foreach \j in {0,...,3} {\fill[fill=white,draw=black] (\i,\j) circle (2pt);}}
\foreach \i in {13,...,17}  {\fill (\i,3) circle (2pt);}
\foreach \j in {0,...,3}  {\fill (13,\j) circle (2pt);}
\node[above] at (13,3) {$w$}; 
\node[left] at (13,3) {$v$};
\end{tikzpicture} } 
\caption{The first steps of the proof of Theorem~\ref{lower}.}
\label{GBoxH}
\end{figure}

%%GBoxH1
\begin{figure} \centering
\begin{tikzpicture}
\foreach \i in {1,...,5} {\draw (\i,0) -- (\i,1) -- (\i,2) -- (\i,3);}
\foreach \j in {0,...,3} {\draw (1,\j) -- (2,\j) -- (3,\j) -- (4,\j) -- (5,\j);}
\foreach \i in {1,...,5} {\foreach \j in {0,...,3} {\fill[fill=white,draw=black] (\i,\j) circle (2pt);}}
\foreach \i in {1,...,5}  {\fill (\i,3) circle (2pt);}
\foreach \j in {0,...,3}  {\fill (1,\j) circle (2pt);}
\node[above] at (1,3) {$w$}; \node[above] at (2,3) {$w'$};
\node[left] at (1,3) {$v$}; \node[left] at (1,2) {$v'$}; 
\fill[fill=white,draw=black] (1,3) circle (2pt);  \fill (1,2) circle (2pt); 
\fill[fill=white,draw=black] (2,3) circle (2pt);  \fill (2,2) circle (2pt); 
\draw[->] (5.75,1.5) -- (6.25,1.5); 
\foreach \i in {7,...,11} {\draw (\i,0) -- (\i,1) -- (\i,2) -- (\i,3);}
\foreach \j in {0,...,3} {\draw (7,\j) -- (8,\j) -- (9,\j) -- (10,\j) -- (11,\j);}
\foreach \i in {7,...,11} {\foreach \j in {0,...,3} {\fill[fill=white,draw=black] (\i,\j) circle (2pt);}}
\foreach \i in {7,...,11}  {\fill (\i,3) circle (2pt);}
\foreach \j in {0,...,3}  {\fill (7,\j) circle (2pt);}
\node[above] at (7,3) {$w$}; \node[above] at (8,3) {$w'$};
\node[left] at (7,3) {$v$}; \node[left] at (7,2) {$v'$}; 
\fill[fill=white,draw=black] (7,3) circle (2pt);  \fill[fill=white,draw=black]  (7,2) circle (2pt); 
\fill (8,3) circle (2pt);  \fill[fill=white,draw=black]  (8,2) circle (2pt); 
\draw[->] (11.75,1.5) -- (12.25,1.5); 
\foreach \i in {13,...,17} {\draw (\i,0) -- (\i,1) -- (\i,2) -- (\i,3);}
\foreach \j in {0,...,3} {\draw (13,\j) -- (14,\j) -- (15,\j) -- (16,\j) -- (17,\j);}
\foreach \i in {13,...,17} {\foreach \j in {0,...,3} {\fill[fill=white,draw=black] (\i,\j) circle (2pt);}}
\foreach \j in {0,1,3}  {\fill (13,\j) circle (2pt);}
\node[above] at (13,3) {$w$}; \node[above] at (14,3) {$w'$};
\node[left] at (13,3) {$v$}; \node[left] at (13,2) {$v'$};  
\end{tikzpicture}
\caption{Completion of Theorem~\ref{lower} when solving $H(v)$ ends at $(v,w)$.}
\label{GBoxH1}
\end{figure}

%GBoxH2
\begin{figure} \centering
\begin{tikzpicture}
\foreach \i in {1,...,5} {\draw (\i,0) -- (\i,1) -- (\i,2) -- (\i,3);}
\foreach \j in {0,...,3} {\draw (1,\j) -- (2,\j) -- (3,\j) -- (4,\j) -- (5,\j);}
\foreach \i in {1,...,5} {\foreach \j in {0,...,3} {\fill[fill=white,draw=black] (\i,\j) circle (2pt);}}
\foreach \i in {1,...,5}  {\fill (\i,3) circle (2pt);}
\foreach \j in {0,...,3}  {\fill (1,\j) circle (2pt);}
\node[above] at (1,3) {$w$}; \node[above] at (2,3) {$w'$};
\node[left] at (1,3) {$v$}; \node[left] at (1,2) {$v'$}; 
\fill[fill=white,draw=black] (1,3) circle (2pt);  \fill[fill=white,draw=black] (1,2) circle (2pt); 
\fill (2,3) circle (2pt);  \fill (2,2) circle (2pt); 
\draw[->] (5.75,1.5) -- (6.25,1.5); 
\foreach \i in {7,...,11} {\draw (\i,0) -- (\i,1) -- (\i,2) -- (\i,3);}
\foreach \j in {0,...,3} {\draw (7,\j) -- (8,\j) -- (9,\j) -- (10,\j) -- (11,\j);}
\foreach \i in {7,...,11} {\foreach \j in {0,...,3} {\fill[fill=white,draw=black] (\i,\j) circle (2pt);}}
\foreach \j in {0,1}  {\fill (7,\j) circle (2pt);}
\node[above] at (7,3) {$w$}; \node[above] at (8,3) {$w'$};
\node[left] at (7,3) {$v$}; \node[left] at (7,2) {$v'$}; 
\fill(8,3) circle (2pt);  \fill  (8,2) circle (2pt); 
\draw[->] (11.75,1.5) -- (12.25,1.5); 
\foreach \i in {13,...,17} {\draw (\i,0) -- (\i,1) -- (\i,2) -- (\i,3);}
\foreach \j in {0,...,3} {\draw (13,\j) -- (14,\j) -- (15,\j) -- (16,\j) -- (17,\j);}
\foreach \i in {13,...,17} {\foreach \j in {0,...,3} {\fill[fill=white,draw=black] (\i,\j) circle (2pt);}}
\foreach \j in {0,1,3}  {\fill (13,\j) circle (2pt);}
\node[above] at (13,3) {$w$}; \node[above] at (14,3) {$w'$};
\node[left] at (13,3) {$v$}; \node[left] at (13,2) {$v'$};  
\end{tikzpicture}
\caption{Completion of Theorem~\ref{lower} when solving $H(v)$ ends at $(v,w')$.}
\label{GBoxH2}
\end{figure}

\begin{theorem}\label{lower}
If $G$ is a freely solvable graph, and $H$ is a freely neighborhood-solvable graph, then $F(G \mybox H) \ge F(G)F(H)$.
\end{theorem}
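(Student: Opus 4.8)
The plan is to produce an explicit terminal state of $G\mybox H$ of size $F(G)F(H)$. Fix a maximum terminal state $S_G$ of $G$ and a maximum terminal state $S_H$ of $H$, so $|S_G|=F(G)$ and $|S_H|=F(H)$; I claim $S_G\times S_H$ is a terminal state of $G\mybox H$. By Proposition~\ref{important} it is enough to solve the configuration $C_0$ on $G\mybox H$ whose holes are exactly the vertices of $S_G\times S_H$. (If $G$ or $H$ has a single vertex the inequality is immediate, so assume both have at least two vertices.) Write $G(c)$ and $H(v)$ for the copies of $G$ and $H$ determined by $c\in V(H)$ and $v\in V(G)$, and $(v,c)$ for the common vertex. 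In $C_0$, the copy $G(c)$ has holes exactly at $S_G\times\{c\}$ when $c\in S_H$ and is full when $c\notin S_H$.

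By Proposition~\ref{important} there is a sequence $\sigma_G$ of jumps in $G$ taking the configuration with holes at $S_G$ to a single peg, at some vertex $v_0$; similarly fix $\sigma_H$ taking holes-at-$S_H$ in $H$ to a single peg, at some vertex $w_0$. \emph{Phase 1:} for each $c\in S_H$ in turn, run $\sigma_G$ inside $G(c)$ (legal since $G(c)$ has holes exactly at $S_G\times\{c\}$, and these moves stay inside $G(c)$), leaving the single peg $(v_0,c)$. \emph{Phase 2:} after Phase 1 each copy $H(r)$ with $r\ne v_0$ has holes exactly at $\{r\}\times S_H$, so for each such $r$ run $\sigma_H$ inside $H(r)$, leaving the single peg $(r,w_0)$. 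A direct check shows the two phases produce the ``cross'' configuration $C^\ast$ whose pegs are exactly $V(H(v_0))\cup V(G(w_0))$ and whose holes are everything else (Figure~\ref{GBoxH}). Using the \emph{same} $\sigma_G$, resp.\ $\sigma_H$, in every copy is what makes all surviving pegs line up in $H(v_0)$, resp.\ $G(w_0)$.

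\emph{Phase 3} solves the cross. Since $H$ is freely neighborhood-solvable, fix \emph{in advance} a solution $\gamma$ of $H$ from the hole at $w_0$ ending with a peg at some $x_0\in N_H[w_0]$; choose $v_1\in N_G(v_0)$, and choose $w_1\in N_H(w_0)$ with $w_1=x_0$ in the case $x_0\ne w_0$. Starting from $C^\ast$, one performs one or two preparatory jumps among the four vertices $(v_0,w_0),(v_0,w_1),(v_1,w_0),(v_1,w_1)$ — which jumps, and how many, depends on whether $x_0=w_0$ or $x_0\ne w_0$ (Figures~\ref{GBoxH1} and~\ref{GBoxH2}) — so as to reach a configuration in which $H(v_0)$ has a single hole, at $(v_0,w_0)$, while the only pegs removed from $G(w_0)$ are $(v_0,w_0)$ and $(v_1,w_0)$. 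Now run $\gamma$ inside $H(v_0)$: it ends with a single peg at $(v_0,x_0)$. If $x_0=w_0$, that peg rejoins $G(w_0)$, leaving $G(w_0)$ missing only $(v_1,w_0)$; if $x_0\ne w_0$, one further jump (absorbing the peg $(v_1,w_1)$ left behind by the preparatory jump) again reduces everything to $G(w_0)$ missing a single vertex. Either way $G(w_0)$ now carries exactly one hole, and since $G$ is freely solvable we finish by solving $G(w_0)$ down to one peg.

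The routine-but-delicate part, and the main obstacle, is Phase 3: the bookkeeping of the handful of jumps near $(v_0,w_0)$ must never strand a peg (a peg left alone in a copy $G(c)$ or $H(r)$ that has already been emptied can never be removed), and in particular the preparatory jumps must be chosen \emph{using foreknowledge of where $\gamma$ terminates}, so that the leftover peg they create gets absorbed precisely when $\gamma$ ends at a neighbor of $w_0$ rather than at $w_0$ itself. Once the case split is set up this way, checking that each listed jump is legal from the configuration reached so far, and that Phases 1 and 2 really yield $C^\ast$, is straightforward.
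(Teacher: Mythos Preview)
Your proof is correct and follows essentially the same approach as the paper's: build the ``cross'' configuration $V(H(v_0))\cup V(G(w_0))$ by solving first the $G$-copies indexed by $S_H$ and then the $H$-copies indexed by $V(G)\setminus\{v_0\}$, and then collapse the cross via a short case analysis on where the neighborhood-solution $\gamma$ of $H$ terminates. Your presentation is marginally cleaner in that you fix $\gamma$ and its terminal vertex $x_0$ \emph{before} doing the preparatory jumps and choose $w_1$ accordingly, whereas the paper splits into cases after the fact; but the actual jumps, in both cases, are identical to the paper's.
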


\begin{proof}
For $v \in V(G)$, let $H(v)$ be the copy of $H$ associated with $v$. Similarly, for $w \in V(H)$, let $G(w)$ be the copy of $G$ associated with $w$. Let $S_G$ be a maximum-sized terminal state for $G$ and let $S_H$ be a maximum-sized terminal state for $H$. By Proposition~\ref{important}, to show that $S_G \times S_H$ is a terminal state in $G \mybox H$, it suffices to solve the configuration with holes at $S_G \times S_H$. Figure~\ref{GBoxH} shows the first steps of the proof. Note that the figures are for illustrative purposes and are not meant to be an example, specifically paths do not meet the given criteria. 

For each $x \in S_H$, we know that $G(x)$ is solvable from this configuration of holes, by Proposition~\ref{important}. Solve these so that each such $G(x)$ leaves its final peg in the same location, say its copy of $v$. Now all vertices of $H(v)$ have pegs and every copy of $H$ except $H(v)$ has holes at the vertices of $S_H$. Now by Proposition~\ref{important} we may solve every copy of $H$ except $H(v)$ so that the final pegs all end up at a copy of the same vertex in $H$; call it $w$. This initial portion of the procedure does not use the properties assumed for $G$ and $H$. 

At this point, there are pegs on all vertices of $H(v)$ and $G(w)$ and no pegs on any other vertices. By our assumption, $H$ can be solved starting with a hole at $w$ and ending with a peg in $N[w]$. 

First, suppose that the final peg in this solution ends at $w$. Let $w'$ be any neighbor of $w$ in $H$ and let $v'$ be any neighbor of $v$ in $G$. Now $\{(v,w), (v',w), (v',w'), (v,w')\}$ induce a 4-cycle in $G \mybox H$. Make the jumps in Figure~\ref{GBoxH1}: jump $(v,w) (v,w') (v',w')$ and then $(v',w) (v',w') (v,w')$. Now $H(v)$ has a hole at $(v,w)$ and at no other location. Solve $H(v)$ so that the final peg ends at $(v,w)$. Now the remaining pegs occur on $V(G(w))-\{(v',w)\}$. Because $G$ is freely solvable, we may solve $G(w)$ and thus solve $G \mybox H$.

Suppose instead that when we solve $H$ from a hole at $w$ that we end with a peg at $w'$, a neighbor of $w$ in $H$. Let $v'$ be any neighbor of $v$ in $G$. Again $\{(v,w), (v',w), (v',w'), (v,w')\}$ induces a 4-cycle in $G \mybox H$. Starting from the state with pegs only on $H(v)$ and $G(w)$ (see last stage of Figure~\ref{GBoxH}), make the jumps in Figure~\ref{GBoxH2}: start with jump $(v,w) (v',w) (v',w')$. Now $H(v)$ has a hole at $(v,w)$ and at no other location. Solve $H(v)$ so that the final peg ends at $(v,w')$. Then jump $(v',w')(v,w')(v,w)$. Now the remaining pegs are on $V(G(w))-(v',w)$. Because $G$ is freely solvable, we may solve $G(w)$ and thus solve $G \mybox H$. 
\end{proof}

In the theorem above, $G$ only needs to satisfy that the configuration having a hole only at some neighbor of $v$ is solvable, where $v$ is a vertex to which the configuration with holes at $S_G$ is solvable.

Computer testing shows that the inequality in Theorem~\ref{lower} is sharp for $P_2 \mybox P_2$ and $(K_4-e) \mybox (K_4-e)$ but not for $(K_4-e) \mybox C_4$, $C_4 \mybox C_4$ or $P_2 \mybox C_n$ when $n \in \{4,6,8\}$. However, this inequality does not hold for every graph: the fool's solitaire number of the cartesian product of $K_{1,3}$ with either $P_3$ or the paw $G$ is less than $F(K_{1,3})F(P_3)$ or $F(K_{1,3})F(G)$ respectively.

\section{Acknowledgments}

Both authors took part in the Combinatorics REGS group at the University of Illinois at Urbana--Champaign during the summer of 2013 and acknowledge support from National Science Foundation grant DMS 08-38434 ``EMSW21-MCTP: Research Experience for Graduate Students.'' We thank Thomas Mahoney and Gregory J. Puleo for examples that motivated our work on graph joins. We also thank Douglas B. West for listening to our ideas and providing a great deal of advice on the editing of this paper.

\bibliographystyle{plain}

\bibliography{FoolsSolitaire}

\end{document}